\theoremstyle{plain}
  \declaretheorem[numberwithin=section]{theorem}
  \declaretheorem[numberlike=theorem]{corollary}
  \declaretheorem[numberlike=theorem]{proposition}
  \declaretheorem[numberlike=theorem]{lemma}
\theoremstyle{definition}
  \declaretheorem[numberlike=theorem]{example}
\begin{document}

\title{Criteria for the integrality of $n$th roots of power series}

\author{
  John Pomerat\quad\&\quad Armin Straub
  \\ University of South Alabama
}

\date{June 17, 2024}

\maketitle

\begin{abstract}
  Heninger, Rains and Sloane raised the question of which power series with
  integer coefficients can be written as the $n$th power of another power
  series with integer coefficients and constant term $1$. We provide necessary
  and sufficient conditions, as well as compare with a general integrality
  criterion due to Dieudonn\'e and Dwork that can be applied to this
  question as well.
\end{abstract}

\section{Introduction}

Heninger, Rains and Sloane \cite{hrs-integrality} raise and investigate the
question of charactizing when a formal power series $f (x) \in 1 + x\mathbb{Z}
[[x]]$ has an $n$th root $g (x) \in 1 + x\mathbb{Z} [[x]]$. That is, given $f
(x)$, does there exist $g (x) \in 1 + x\mathbb{Z} [[x]]$ such that $f (x) = g
(x)^n$? Among other results, a necessary condition is proved in
\cite[Theorem~16]{hrs-integrality} in the case where $n$ is a prime. In the
following, we extend this condition to the case of prime powers and prove that
it is not only necessary but also sufficient. As explained below, the case of
general $n$ can be reduced to the case of prime powers so that
Theorem~\ref{thm:ps:int} can be considered an answer to the question of
Heninger, Rains and Sloane.

\begin{theorem}
  \label{thm:ps:int}Let $a_1, a_2, \ldots \in \mathbb{Z}_p$ and $r \in
  \mathbb{Z}_{> 0}$. The power series $(1 + a_1 x + a_2 x^2 + \ldots)^{1 /
  p^r}$ has $p$-integral coefficients if and only if
  \begin{equation}
    1 + a_1 x + a_2 x^2 + \ldots \equiv (1 + a_{p^r} x + a_{2 p^r} x^2 +
    \ldots)^{p^r} \pmod{p^{r + 1}} . \label{eq:ps:int}
  \end{equation}
\end{theorem}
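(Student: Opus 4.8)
The plan is to treat the two implications separately, relying on two standard facts about power series over $\mathbb{Z}_p$: the Frobenius congruence $\phi(x)^p \equiv \phi(x^p) \pmod p$ for $\phi \in \mathbb{Z}_p[[x]]$, and the lifting principle that $A \equiv B \pmod{p^s}$ implies $A^p \equiv B^p \pmod{p^{s+1}}$ for $A,B \in \mathbb{Z}_p[[x]]$ and $s \ge 1$. Write $f(x) = 1 + a_1 x + a_2 x^2 + \cdots$, let $g = f^{1/p^r} \in 1 + x\mathbb{Q}_p[[x]]$ be the binomial $p^r$-th root, and let $U$ denote the decimation operator $U(\sum c_k x^k) = \sum c_{pk}x^k$, so that the right-hand side of \eqref{eq:ps:int} is exactly $F^{p^r}$ with $F := U^r f = 1 + a_{p^r}x + a_{2p^r}x^2 + \cdots$.

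For the \emph{necessity} direction I would assume $g \in 1 + x\mathbb{Z}_p[[x]]$ and derive \eqref{eq:ps:int}. Iterating the Frobenius congruence $r$ times gives $g(x)^{p^r} \equiv g(x^{p^r}) \pmod p$; since applying $U^r$ to a mod-$p$ congruence preserves it while $U^r(g(x^{p^r})) = g(x)$, I obtain $F \equiv g \pmod p$. Raising this to the $p^r$-th power and applying the lifting principle $r$ times upgrades the modulus from $p$ to $p^{r+1}$, yielding $F^{p^r} \equiv g^{p^r} = f \pmod{p^{r+1}}$, which is precisely \eqref{eq:ps:int}.

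For the \emph{sufficiency} direction, note that $F \in 1 + x\mathbb{Z}_p[[x]]$ (its coefficients are among the $a_i \in \mathbb{Z}_p$), so $F^{p^r}$ is an integral unit, and \eqref{eq:ps:int} lets me write $f = F^{p^r}(1 + p^{r+1}s)$ with $s \in x\mathbb{Z}_p[[x]]$. Then $g = F \cdot (1 + p^{r+1}s)^{1/p^r}$, so everything reduces to the key lemma that $(1 + p^{r+1}s)^{1/p^r} \in 1 + x\mathbb{Z}_p[[x]]$. Expanding by the binomial series, this amounts to showing that each coefficient $\binom{1/p^r}{k}p^{(r+1)k}$ is $p$-integral. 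I expect this valuation estimate to be the main obstacle: using that every factor $1/p^r - j$ has valuation $-r$ and Legendre's formula $v_p(k!) = (k - s_p(k))/(p-1)$, where $s_p(k)$ is the base-$p$ digit sum, one computes $v_p\!\left(\binom{1/p^r}{k}p^{(r+1)k}\right) = k - \frac{k - s_p(k)}{p-1} \ge 0$, with strict slack for $p \ge 3$ and reducing to $s_2(k) \ge 1$ when $p = 2$. Since $s \in x\mathbb{Z}_p[[x]]$, only finitely many $k$ contribute to each coefficient, so no convergence issue arises and integrality of $g$ follows. As a cross-check I would confirm consistency with the Dieudonn\'e--Dwork criterion through the identity $g(x)^p/g(x^p) = (f(x)^p/f(x^p))^{1/p^r}$, although the direct argument above appears to avoid that machinery.
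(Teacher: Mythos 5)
Your proof is correct and takes essentially the same approach as the paper's: your necessity argument (iterated Frobenius, decimation to get $F \equiv g \pmod{p}$, then lifting the congruence to modulus $p^{r+1}$) matches the paper's use of Fermat's little theorem together with its Lemma~\ref{lem:f:g:pr}, and your sufficiency argument (factoring $f = F^{p^r}(1+p^{r+1}s)$ and bounding $\nu_p\bigl(\binom{1/p^r}{k}p^{(r+1)k}\bigr)$ via Legendre's formula) is exactly the paper's Lemma~\ref{lem:bin} combined with the same factorization $f^{1/p^r} = F\,(1+p^{r+1}s)^{1/p^r}$. The only cosmetic difference is that the paper routes the forward direction through the intermediate condition ``$f \equiv g(x)^{p^r} \pmod{p^{r+1}}$ for some $g \in \mathbb{Z}_p[[x]]$'' to obtain a three-way equivalence, which your streamlined two-way argument skips.
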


Here, as well as throughout, $p$ denotes a prime. We write $\mathbb{Z}_p$ for
the ring of $p$-adic integers as well as $\mathbb{Q}_p$ for its fraction field
(for nice introductions to $p$-adic numbers, we refer to \cite{koblitz-p}
and \cite{robert-p}). For $A, B \in \mathbb{Q}_p [[x]]$, we will write
either $A \equiv B \pmod{p^r \mathbb{Z}_p [[x]]}$ or, more
succinctly, $A \equiv B \pmod{p^r}$ to mean that $A - B \in
p^r \mathbb{Z}_p [[x]]$. Since $\mathbb{Z}$ and $\mathbb{Q}$ naturally embed
into $\mathbb{Z}_p$ and $\mathbb{Q}_p$, we may without possible confusion
treat them as subsets. For instance, given $a \in \mathbb{Q}$, we say that $a
\in \mathbb{Z}_p$ if and only if the (reduced) denominator of $a$ is not a
multiple of $p$.

As reviewed in Section~\ref{sec:review}, if $f (x) = g (x)^n$ for two power
series $f (x), g (x)$ in $1 + x\mathbb{Q} [[x]]$, then we necessarily have $g
(x) = f (x)^{1 / n}$ where $f (x)^{1 / n}$ is well-defined as a power series
by the binomial expansion \eqref{eq:bin}. A more general (but ultimately
equivalent) version of the question by Heninger, Rains and Sloane therefore is
to ask: under which conditions on $f (x) \in 1 + x\mathbb{Z} [[x]]$ and
$\lambda \in \mathbb{Q}$ has the power series $f (x)^{\lambda}$ integral
coefficients itself? This question for rational exponents $\lambda$ readily
reduces to the question for $\lambda = 1 / p^r$ where $p^r$ is a prime power
(and this is the case covered by Theorem~\ref{thm:ps:int}).

\begin{proposition}
  \label{prop:lambda:pr}Suppose $f (x) \in 1 + x\mathbb{Z} [[x]]$ and $\lambda
  \in \mathbb{Q}$. Write $\lambda = n / (p_1^{r_1} \cdots p_m^{r_m})$, where
  $r_j \in \mathbb{Z}_{> 0}$ and where the $p_j$ are distinct primes not
  dividing $n \in \mathbb{Z}$. Then we have $f (x)^{\lambda} \in \mathbb{Z}
  [[x]]$ if and only if $f (x)^{1 / p^r} \in \mathbb{Z}_p [[x]]$ for all $p^r
  \in \{ p_1^{r_1}, \ldots, p_m^{r_m} \}$.
\end{proposition}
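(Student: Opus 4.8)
The plan is to reduce the global integrality statement over $\mathbb{Z}$ to a family of local statements over the $\mathbb{Z}_p$, and then, prime by prime, to strip off the part of the exponent that is a $p$-adic unit. First I would record the local--global principle that a power series $h (x) \in \mathbb{Q} [[x]]$ lies in $\mathbb{Z} [[x]]$ if and only if $h (x) \in \mathbb{Z}_p [[x]]$ for every prime $p$; this is immediate coefficient by coefficient from the description of $\mathbb{Q} \cap \mathbb{Z}_p$ recalled in the introduction, since a rational number is an integer exactly when no prime divides its denominator. Applying this to $h (x) = f (x)^{\lambda}$, which lies in $1 + x\mathbb{Q} [[x]]$ by the binomial expansion, shows that $f (x)^{\lambda} \in \mathbb{Z} [[x]]$ is equivalent to $f (x)^{\lambda} \in \mathbb{Z}_p [[x]]$ for all primes $p$.

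The key tool I would then establish is: if $\lambda \in \mathbb{Z}_p$ and $g (x) \in 1 + x\mathbb{Z}_p [[x]]$, then $g (x)^{\lambda} \in 1 + x\mathbb{Z}_p [[x]]$. To see this I would write $g^{\lambda} = \sum_{j \geq 0} \binom{\lambda}{j} (g - 1)^j$ and note that each $(g - 1)^j$ has coefficients in $\mathbb{Z}_p$ and contributes only to terms of degree $\geq j$, so each coefficient of $g^{\lambda}$ is a finite $\mathbb{Z}_p$-combination of the $\binom{\lambda}{j}$; the point is then that $\binom{\lambda}{j} \in \mathbb{Z}_p$ for every $\lambda \in \mathbb{Z}_p$, which follows because $\lambda \mapsto \binom{\lambda}{j}$ is a polynomial, hence continuous, map taking integer values on the dense subset $\mathbb{Z} \subseteq \mathbb{Z}_p$, and $\mathbb{Z}_p$ is closed in $\mathbb{Q}_p$.

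Next I would use the exponent law $(g^{\lambda})^{\mu} = g^{\lambda \mu}$, valid as a formal identity in $1 + x\mathbb{Q}_p [[x]]$ via $\log$ and $\exp$ (for which $\log (g^{\lambda}) = \lambda \log g$). Fixing one prime $p = p_j$, I would write $N = p_1^{r_1} \cdots p_m^{r_m} = p^{r_j} M$ with $p \nmid M$, so that $\lambda = \tfrac{n}{N} = \tfrac{1}{p^{r_j}} \mu$ with $\mu = n / M$. Since $p$ divides neither $n$ nor $M$, the factor $\mu$ is a $p$-adic unit, so both $\mu$ and $\mu^{- 1}$ lie in $\mathbb{Z}_p$. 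Then $f^{\lambda} = (f^{1 / p^{r_j}})^{\mu}$ and $f^{1 / p^{r_j}} = (f^{\lambda})^{\mu^{- 1}}$, and applying the key tool in each direction (both base series lying in $1 + x\mathbb{Z}_p [[x]]$) shows $f (x)^{\lambda} \in \mathbb{Z}_p [[x]] \iff f (x)^{1 / p_j^{r_j}} \in \mathbb{Z}_{p_j} [[x]]$.

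Finally I would handle the remaining primes: if $p \notin \{ p_1, \ldots, p_m \}$ then $p \nmid N$, so $\lambda = n / N \in \mathbb{Z}_p$ and the key tool gives $f (x)^{\lambda} \in \mathbb{Z}_p [[x]]$ unconditionally. Combining, $f (x)^{\lambda} \in \mathbb{Z}_p [[x]]$ holds automatically for $p \notin \{ p_1, \ldots, p_m \}$ and is equivalent to $f (x)^{1 / p_j^{r_j}} \in \mathbb{Z}_{p_j} [[x]]$ for $p = p_j$; together with the local--global principle this yields the claim. I expect the only real subtlety to be the key tool---specifically the $p$-integrality of $\binom{\lambda}{j}$ for $\lambda \in \mathbb{Z}_p$---while the rest is bookkeeping with the exponent law and the factorization of $\lambda$.
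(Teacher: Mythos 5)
Your proof is correct and takes essentially the same route as the paper: a coefficientwise local--global reduction to all primes $p$, the observation that $\lambda \in \mathbb{Z}_p$ already forces $f(x)^{\lambda} \in \mathbb{Z}_p[[x]]$ at primes outside $\{p_1,\ldots,p_m\}$, and, at each $p_j$, stripping off the $p$-adic unit factor of the exponent via the power law $(f^{\lambda})^{\mu} = f^{\lambda\mu}$. The only difference is that you spell out the supporting facts (the $p$-integrality of $\binom{\lambda}{j}$ for $\lambda \in \mathbb{Z}_p$ and the resulting unit-exponent equivalence), which the paper simply cites from its review in Section~\ref{sec:review}.
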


\begin{proof}
  Observe that $f (x)^{\lambda} \in \mathbb{Z} [[x]]$ if and only if $f
  (x)^{\lambda} \in \mathbb{Z}_p [[x]]$ for all primes $p$. If $p$ is a prime
  not dividing the denominator of $\lambda$, then $\lambda \in \mathbb{Z}_p$
  and it follows readily that $f (x)^{\lambda} \in \mathbb{Z}_p [[x]]$ (see
  the discussion after the binomial expansion \eqref{eq:bin}). On the other
  hand, suppose that $p^r \in \{ p_1^{r_1}, \ldots, p_m^{r_m} \}$. Then, as
  reviewed in Section~\ref{sec:review}, $f (x)^{\lambda} \in \mathbb{Z}_p
  [[x]]$ if and only if $f (x)^{\lambda \mu} = f (x)^{1 / p^r} \in
  \mathbb{Z}_p [[x]]$ where $\mu = 1 / (p^r \lambda) \in
  \mathbb{Z}_p^{\times}$.
\end{proof}

Our criterion for the $p$-integrality of power series $f (x)^{1 / p^r}$ proved
in Theorem~\ref{thm:ps:int} is particularly easy to use for certain basic
applications, such as the ones illustrated in the examples below.

\begin{example}
  \label{eg:intro:int}As a first, particularly simple application of
  Theorem~\ref{thm:ps:int}, let us confirm that the power series
  \begin{equation}
    (1 - 12 x + 12 x^2 + 8 x^3)^{- 1 / 6} = 1 + 2 x + 12 x^2 + 92 x^3 + 784
    x^4 + 7056 x^5 + \ldots \label{eq:intro:int}
  \end{equation}
  has integer coefficients. By Proposition~\ref{prop:lambda:pr}, this is the
  case if and only if $f (x)^{1 / p} \in \mathbb{Z}_p [[x]]$ for $p \in \{ 2,
  3 \}$ where $f (x) = 1 - 12 x + 12 x^2 + 8 x^3$. In the case $p = 2$, by
  Theorem~\ref{thm:ps:int}, we have $f (x)^{1 / 2} \in \mathbb{Z}_2 [[x]]$ if
  and only if $f (x) \equiv (1 + 12 x)^2 \pmod{4}$, which
  is clearly true. Likewise, $f (x)^{1 / 3} \in \mathbb{Z}_3 [[x]]$ if and
  only if $f (x) \equiv (1 + 8 x)^3 \pmod{9}$, which is
  true as well. On the other hand, the same analysis shows that, for instance,
  $(1 - 12 x + 12 x^2 - 8 x^3)^{- 1 / 6}$ does not have coefficients that are
  all integers (indeed, the coefficient of $x^3$ is $284 / 3$).
\end{example}

\begin{example}
  \label{eg:pr1}It follows from Theorem~\ref{thm:ps:int} that, if $a_1, a_2,
  \ldots, a_d \in \mathbb{Z}_p$ with $d < p^r$, then $(1 + a_1 x + a_2 x^2 +
  \ldots + a_d x^d)^{1 / p^r} \in \mathbb{Z}_p [[x]]$ if and only if $a_1,
  a_2, \ldots, a_d \in p^{r + 1} \mathbb{Z}_p$. In particular, $(1 + a x)^{1 /
  p^r} \in \mathbb{Z}_p [[x]]$ if and only if $a \in p^{r + 1} \mathbb{Z}_p$.
\end{example}

\begin{example}
  \label{eg:bin:kcentral}The coefficients $c (n, k)$ in the power series
  expansions
  \begin{equation*}
    \sum_{n = 0}^{\infty} c (n, k) x^n = (1 - k^2 x)^{- 1 / k}
  \end{equation*}
  are studied in \cite{ams-centralbinomials}. These numbers are referred to
  as $k$-central binomial coefficients, with $c (n, 2) = \binom{2 n}{n}$ the
  usual central binomial coefficients. It follows from the binomial theorem,
  see \cite[Proposition~2.1]{ams-centralbinomials}, that
  \begin{equation*}
    c (n, k) = (- 1)^n \binom{- 1 / k}{n} k^{2 n} = \frac{k^n}{n!} \prod_{m =
     1}^{n - 1} (1 + k m) .
  \end{equation*}
  In \cite[Theorem~2.2]{ams-centralbinomials}, these coefficients are proved
  to be positive integers. For instance, in the case $k = 4$, this proves that
  $(1 - 16 x)^{- 1 / 4} \in \mathbb{Z} [[x]]$. This, however, is not best
  possible in the sense that, already, $(1 - 8 x)^{- 1 / 4} \in \mathbb{Z}
  [[x]]$ (this observation is equivalent to the fact that $2^n$ divides $c (n,
  4)$). More generally, we find, as an application of
  Theorem~\ref{thm:ps:int}, that, for any nonzero $k \in \mathbb{Z}$,
  \begin{equation}
    (1 - k \operatorname{rad} (k) x)^{- 1 / k} \in \mathbb{Z} [[x]],
    \label{eq:bin:k:radk}
  \end{equation}
  where $\operatorname{rad} (k)$ denotes the largest squarefree integer dividing $k$
  (for instance, $\operatorname{rad} (4) = 2$ and $\operatorname{rad} (24) = 6$). The result
  \eqref{eq:bin:k:radk} is a strengthening of
  \cite[Theorem~2.2]{ams-centralbinomials} and follows directly from the
  observation made in the previous example that $(1 + a x)^{1 / p^r} \in
  \mathbb{Z}_p [[x]]$ if and only if $a \in p^{r + 1} \mathbb{Z}_p$. Moreover,
  it follows from Theorem~\ref{thm:ps:int} combined with
  Proposition~\ref{prop:lambda:pr} that \eqref{eq:bin:k:radk} is strongest
  possible in the sense that, with the exponent $- 1 / k$ fixed, $(1 - a x)^{-
  1 / k} \in \mathbb{Z} [[x]]$ if and only if $a$ is a multiple of $k
  \operatorname{rad} (k)$.
\end{example}

\begin{example}
  \label{eg:nthroot:mod}Suppose that $f (x) \in 1 + x\mathbb{Z} [[x]]$ and $n
  \in \mathbb{Z}_{> 0}$. Heninger, Rains and Sloane proved
  \cite[Theorem~1]{hrs-integrality} that the power series $f (x)^{1 / n}$
  has integer coefficients if and only if this is true for $f (x)$ with its
  coefficients reduced modulo $n \operatorname{rad} (n)$. As in the previous example,
  this result can be deduced as a corollary of Theorem~\ref{thm:ps:int}.
\end{example}

\begin{example}
  For $a, b, c \in \mathbb{Z}$, Noe \cite{noe-trinomial} studies the
  generalized trinomial coefficients
  \begin{equation*}
    T_n (a, b, c) = [x^n] (a + b x + c x^2)^n,
  \end{equation*}
  where $[x^n] f (x)$ denotes the coefficient of $x^n$ in $f (x)$, and notes
  that they have the ordinary generating function
  \begin{equation}
    \sum_{n \geq 0} T_n (a, b, c) x^n = \frac{1}{\sqrt{1 - 2 b x + d
    x^2}} \label{eq:trinomial:ogf}
  \end{equation}
  where $d = b^2 - 4 a c$ (note that $T_n (a, b, c)$ only depends on $a c$,
  not on the individual values of $a$ and $c$). As an application of
  Theorem~\ref{thm:ps:int}, we can conclude that these are the only instances
  in which the power series $(1 + \alpha x + \beta x^2)^{- 1 / 2}$, with
  $\alpha, \beta \in \mathbb{Z}$, has integer coefficients. Indeed, applying
  Theorem~\ref{thm:ps:int}, the condition~\eqref{eq:ps:int} with $p = 2$, $r =
  1$ becomes
  \begin{equation*}
    1 + \alpha x + \beta x^2 \equiv (1 + \beta x)^2 \pmod{4},
  \end{equation*}
  which is equivalent to $\alpha \equiv 2 \beta$ and $\beta \equiv \beta^2
  \pmod{4}$. In other words, $\alpha \equiv 2 \beta$ and
  $\beta \equiv 0, 1 \pmod{4}$. Since $\alpha$ is
  necessarily even, we may set $b = - \alpha / 2 \in \mathbb{Z}$ and $d =
  \beta$ to match the right-hand side of \eqref{eq:trinomial:ogf}. In terms of
  $b$ and $d$, the conditions for integrality reduce to
  \begin{equation*}
    b \equiv d \pmod{2}, \quad d \equiv 0, 1 \pmod{4},
  \end{equation*}
  which can be combined to $d \equiv b^2 \pmod{4}$. In
  particular, letting $a = 1$ and $c = (b^2 - d) / 4 \in \mathbb{Z}$, we
  conclude that, if the coefficients of $(1 + \alpha x + \beta x^2)^{- 1 / 2}$
  are integers, then they are given by the generalized trinomial coefficients
  $T_n (a, b, c)$.
\end{example}

\begin{example}
  \label{eg:quadr:int}As another application of Theorem~\ref{thm:ps:int}, let
  us characterize the values $a, b \in \mathbb{Z}$ and $\lambda \in
  \mathbb{Q}$, for which the power series $(1 + a x + b x^2)^{\lambda}$ has
  integer coefficients. As in Example~\ref{eg:pr1}, it follows from
  Theorem~\ref{thm:ps:int} that, if $p^r \neq 2$, then $(1 + a x + b x^2)^{1 /
  p^r} \in \mathbb{Z}_p [[x]]$ if and only if $a, b \in p^{r + 1}
  \mathbb{Z}_p$. On the other hand, as in the previous example, $(1 + a x + b
  x^2)^{1 / 2} \in \mathbb{Z}_2 [[x]]$ if and only if either $a, b \in
  4\mathbb{Z}_2$ or $(a, b) \equiv (2, 1) \pmod{4}$. Let
  $k$ be the denominator of $\lambda$. It follows that $(1 + a x + b
  x^2)^{\lambda} \in \mathbb{Z} [[x]]$ if and only if
  \begin{itemize}
    \item $a, b \in k \operatorname{rad} (k) \mathbb{Z}$, or
    
    \item $k = 2 \kappa$ and $a, b \in \kappa \operatorname{rad} (\kappa) \mathbb{Z}$
    as well as $(a, b) \equiv (2, 1) \pmod{4}$.
  \end{itemize}
  Note that, in the second case, $\kappa$ is necessarily odd.
\end{example}

The remainder of this paper is organized as follows. Before proving
Theorem~\ref{thm:ps:int} in Section~\ref{sec:bin}, we collect some basic
notations and results in Section~\ref{sec:review}. In the final
Section~\ref{sec:dd}, we then review (and slightly extend) the
Dieudonn\'e--Dwork criterion for the purpose of comparison with
Theorem~\ref{thm:ps:int}.

\section{Notations and review}\label{sec:review}

For any formal power series $f (x) \in 1 + x R [[x]]$ over a commutative ring
$R$ containing $\mathbb{Q}$, the power $f (x)^{\lambda}$ can be defined for
any $\lambda \in R$ as a formal power series by the binomial expansion
\begin{equation}
  (1 + a_1 x + a_2 x^2 + \ldots)^{\lambda} = \sum_{n \geq 0}
  \binom{\lambda}{n} (a_1 x + a_2 x^2 + \ldots)^n \in R [[x]], \label{eq:bin}
\end{equation}
where
\begin{equation}
  \binom{\lambda}{n} = \frac{\lambda (\lambda - 1) \cdots (\lambda - n +
  1)}{n!} . \label{eq:bin:coeff}
\end{equation}
We refer to \cite{kauers-paule-ct} and \cite{sambale-fps} for
introductions to formal power series. We note that the binomial expansion
\eqref{eq:bin} for $(1 + x)^{\lambda}$ is equivalent to the definition $(1 +
x)^{\lambda} = \exp (\lambda \log (1 + x))$ used in \cite{sambale-fps} to
which we refer the reader for a detailed discussion of the formal power series
$\exp (x), \log (1 + x) \in \mathbb{Q} [[x]]$ and their properties. In
particular, the expansion \eqref{eq:bin} agrees with products of power series
in the case where $\lambda \in \mathbb{Z}_{\geq 0}$, and the usual power
laws hold. For instance, as pointed out in
\cite[{\textsection}5.1]{kauers-paule-ct}, the multiplication law $f
(x)^{\lambda} f (x)^{\mu} = f (x)^{\lambda + \mu}$ is equivalent to the
Vandermonde convolution identity. Similarly, for any $\lambda, \mu \in R$ and
$f (x) \in 1 + x R [[x]]$, we have $(f (x)^{\lambda})^{\mu} = f (x)^{\lambda
\mu}$, which can be deduced from the corresponding property $\exp
(x)^{\lambda} = \exp (\lambda x)$ of the exponential. Note that this implies
the following observation: if $f (x) = g (x)^{\lambda}$ for $f (x), g (x) \in
1 + x R [[x]]$ and $\lambda \in R^{\times}$, then we necessarily have $g (x) =
f (x)^{1 / \lambda}$.

For our purposes, we note that, if $\lambda \in \mathbb{Z}_p$, then
$\binom{\lambda}{n} \in \mathbb{Z}_p$ for any integer $n \geq 0$ (this is
clear for $\lambda \in \mathbb{Z}_{\geq 0}$ and follows for $\lambda \in
\mathbb{Z}_p$ by $p$-adic continuity and the fact that $\mathbb{Z}_p$ is
closed). Consequently, if $f (x) \in \mathbb{Z}_p [[x]]$ and $\lambda \in
\mathbb{Z}_p$, then $f (x)^{\lambda} \in \mathbb{Z}_p [[x]]$. In particular,
if $\lambda \in \mathbb{Z}_p^{\times}$, then $f (x) \in \mathbb{Z}_p [[x]]$ if
and only if $f (x)^{\lambda} \in \mathbb{Z}_p [[x]]$.

Finally, we recall the following rather well-known result concerning
congruences involving power series (see, for instance,
\cite[Proposition~1.9]{ry-diag13}).

\begin{lemma}
  \label{lem:f:g:pr}For any $f (x), g (x) \in \mathbb{Z}_p [[x]]$ and $r \in
  \mathbb{Z}_{> 0}$,
  \begin{equation*}
    f (x) \equiv g (x) \pmod{p} \quad \Longleftrightarrow \quad f
     (x)^{p^{r - 1}} \equiv g (x)^{p^{r - 1}} \pmod{p^r} .
  \end{equation*}
\end{lemma}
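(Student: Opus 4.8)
The plan is to establish both implications through a single auxiliary \emph{Frobenius lifting} fact: for any $A, B \in \mathbb{Z}_p [[x]]$ and any integer $s \geq 1$, if $A \equiv B \pmod{p^s}$ then $A^p \equiv B^p \pmod{p^{s + 1}}$. To prove this I would write $A = B + p^s C$ with $C \in \mathbb{Z}_p [[x]]$ and expand $A^p = (B + p^s C)^p$ by the binomial theorem. The term indexed by $k$ (for $1 \leq k \leq p$) carries a factor $p^{s k}$, and for $1 \leq k \leq p - 1$ an additional factor of $p$ coming from $p \mid \binom{p}{k}$; the remaining term, at $k = p$, is $p^{s p} C^p$. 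Since $s k + 1 \geq s + 1$ for $k \geq 1$ and $s p \geq s + 1$, every summand beyond the leading $B^p$ lies in $p^{s + 1} \mathbb{Z}_p [[x]]$, which gives the claim.

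For the forward implication I would simply iterate this fact. Starting from the hypothesis $f \equiv g \pmod{p}$, one application yields $f^p \equiv g^p \pmod{p^2}$, a second yields $f^{p^2} \equiv g^{p^2} \pmod{p^3}$, and after $r - 1$ applications one arrives at $f^{p^{r - 1}} \equiv g^{p^{r - 1}} \pmod{p^r}$, exactly as required.

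For the reverse implication I would reduce the hypothesis $f^{p^{r - 1}} \equiv g^{p^{r - 1}} \pmod{p^r}$ modulo $p$ and work in $\mathbb{F}_p [[x]] = (\mathbb{Z}_p / p \mathbb{Z}_p) [[x]]$. There the $p$-power map is the Frobenius endomorphism, and since raising to the $p$th power is additive and fixes $\mathbb{F}_p$ coefficientwise, one has $h (x)^{p^{r - 1}} = h (x^{p^{r - 1}})$ for every $h$. Writing $\bar{f}, \bar{g}$ for the reductions, the reduced hypothesis therefore reads $\bar{f} (x^{p^{r - 1}}) = \bar{g} (x^{p^{r - 1}})$ in $\mathbb{F}_p [[x]]$. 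Because the substitution $x \mapsto x^{p^{r - 1}}$ merely spreads the coefficients apart, it is injective on power series, and I conclude $\bar{f} = \bar{g}$, that is, $f \equiv g \pmod{p}$.

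The only genuinely computational step is the Frobenius lifting fact, and within it the verification that the $k = p$ term $p^{s p} C^p$ is divisible by $p^{s + 1}$; this is precisely where the primality of $p$ (through $p - 1 \geq 1$, so that $s p \geq s + 1$) and the divisibility $p \mid \binom{p}{k}$ for $1 \leq k \leq p - 1$ are both essential. Everything else is formal, and I would not expect any obstacle beyond careful bookkeeping of the $p$-adic valuations.
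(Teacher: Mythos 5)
Your proof is correct and takes essentially the same approach as the paper: your Frobenius lifting fact (expand $(B+p^sC)^p$ and track valuations) is exactly the paper's induction step for the forward direction, and your reverse direction via $h(x)^{p^{r-1}} = h(x^{p^{r-1}})$ in $\mathbb{F}_p[[x]]$ is the paper's appeal to Fermat's little theorem for power series, followed by the same injectivity-of-substitution observation. The only difference is organizational, in that you isolate the lifting step as a standalone statement with a general modulus $p^s$ rather than phrasing it as induction on $r$.
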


\begin{proof}
  By Fermat's little theorem for power series (which follows from the binomial
  expansion \eqref{eq:bin} combined with the fact that $\binom{p}{n}$ is
  divisible by $p$ except if $n = 0$ or $n = p$), we have
  \begin{equation}
    f (x)^p \equiv f (x^p) \pmod{p}, \label{eq:f:fermat}
  \end{equation}
  and thus $f (x)^{p^{r - 1}} \equiv f (x^{p^{r - 1}})$ modulo $p$, for any $f
  (x) \in \mathbb{Z}_p [[x]]$. Therefore, if $f (x)^{p^{r - 1}} \equiv g
  (x)^{p^{r - 1}}$ modulo $p^r$, then $f (x) \equiv g (x)$ modulo $p$.
  
  For the converse, suppose that $f (x) \equiv g (x)$ modulo $p$. We will
  prove $f (x)^{p^{r - 1}} \equiv g (x)^{p^{r - 1}}$ modulo $p^r$ by induction
  on $r$. For $r = 1$, this is true by assumption. Suppose that $f (x)^{p^{r -
  1}} \equiv g (x)^{p^{r - 1}}$ modulo $p^r$ for some $r \geq 1$. Then $f
  (x)^{p^{r - 1}} = g (x)^{p^{r - 1}} + p^r h (x)$ for some $h (x) \in
  \mathbb{Z}_p [[x]]$. Raising both sides to the $p$th power, we find
  \begin{equation*}
    f (x)^{p^r} = (g (x)^{p^{r - 1}} + p^r h (x))^p = g (x)^{p^r} + \sum_{k =
     1}^p \binom{p}{k} g (x)^{p^{r - 1} (p - k)} (p^r h (x))^k .
  \end{equation*}
  For each $k \in \{ 1, 2, \ldots, p \}$, the summand on the right-hand side
  is divisible by at least $p^{r + 1}$, so that we conclude $f (x)^{p^r}
  \equiv g (x)^{p^r}$ modulo $p^{r + 1}$, as needed for the induction step.
\end{proof}

\section{Proof of Theorem~\ref{thm:ps:int}}\label{sec:bin}

In preparation for proving Theorem~\ref{thm:ps:int}, we observe the following
result, which can also be found, for instance, in \cite[Part VIII, Chap.~3,
No.~140]{polya-szego-2}. (Note that it follows from Theorem~\ref{thm:ps:int}
that, more generally, $(1 + a x)^{1 / p^r} \in \mathbb{Z}_p [[x]]$ if and only
if $a \in p^{r + 1} \mathbb{Z}_p$.)

\begin{lemma}
  \label{lem:bin}For any $r \in \mathbb{Z}_{> 0}$, we have $(1 + p^{r + 1}
  x)^{1 / p^r} \in \mathbb{Z}_p [[x]]$.
\end{lemma}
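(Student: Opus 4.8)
The plan is to expand the power series explicitly via the binomial theorem \eqref{eq:bin} and to bound the $p$-adic valuation of each coefficient. Writing $\lambda = 1/p^r$ and applying \eqref{eq:bin} to $1 + p^{r+1} x$, which has only a linear term, the coefficient of $x^n$ is simply
\[
  \binom{\lambda}{n} p^{(r+1) n}.
\]
Thus, letting $v_p$ denote the $p$-adic valuation (normalized so that $v_p (p) = 1$), it suffices to show that $v_p \bigl( \binom{\lambda}{n} \bigr) + (r+1) n \geq 0$ for every $n \geq 0$.

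Next I would compute $v_p \bigl( \binom{\lambda}{n} \bigr)$ directly from the product formula \eqref{eq:bin:coeff}. Writing each factor as $\lambda - m = (1 - m p^r) / p^r$, we obtain
\[
  \binom{\lambda}{n} = \frac{1}{n!} \prod_{m = 0}^{n - 1} \frac{1 - m p^r}{p^r} = \frac{1}{p^{r n} \, n!} \prod_{m = 0}^{n - 1} (1 - m p^r) .
\]
The crux of the argument is the observation that, since $r \geq 1$, each factor $1 - m p^r$ is congruent to $1$ modulo $p$ and hence is a $p$-adic unit; therefore the product $\prod_{m = 0}^{n - 1} (1 - m p^r)$ has valuation $0$. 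Consequently $v_p \bigl( \binom{\lambda}{n} \bigr) = - r n - v_p (n!)$.

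Finally I would invoke Legendre's formula $v_p (n!) = (n - s_p (n)) / (p - 1)$, where $s_p (n) \geq 0$ denotes the sum of the base-$p$ digits of $n$. Substituting and adding $(r + 1) n$ gives
\[
  v_p \bigl( \tbinom{\lambda}{n} \bigr) + (r + 1) n = n - \frac{n - s_p (n)}{p - 1} = \frac{n (p - 2) + s_p (n)}{p - 1} \geq 0,
\]
the inequality holding because $p \geq 2$ and $s_p (n) \geq 0$. This shows that every coefficient lies in $\mathbb{Z}_p$, completing the proof. The only step requiring genuine care is the valuation of the numerator, which rests entirely on the elementary but essential fact that the factors $1 - m p^r$ are units; once this is established, the remaining estimate is a routine application of Legendre's formula. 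I do not expect any serious obstacle beyond keeping track of the bookkeeping in the edge case $p = 2$, where $p - 2 = 0$ and the nonnegativity comes solely from the digit-sum term $s_2 (n)$.
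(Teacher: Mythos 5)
Your proof is correct and follows essentially the same route as the paper: expand via the binomial series, compute $\nu_p\bigl(\binom{1/p^r}{n}\bigr) = -rn - \nu_p(n!)$ (your unit-factor observation is exactly the reason behind the paper's general formula for $\nu_p\bigl(\binom{\lambda}{n}\bigr)$ when $\nu_p(\lambda) < 0$), and finish with Legendre's formula. The only cosmetic difference is that you retain the digit-sum term $s_p(n)$ in the final estimate, whereas the paper discards it and uses the cruder bound $r + \tfrac{1}{p-1} \leq r + 1$; both close the argument, including the case $p = 2$.
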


\begin{proof}
  For any $n \in \mathbb{Q}_p$ (in particular, for any $n \in \mathbb{Q}$),
  denote with $\nu_p (n)$ the $p$-adic valuation of $n$, that is the largest
  $r \in \mathbb{Z}$ such that $n / p^r \in \mathbb{Z}_p$ if $n \neq 0$ (and
  $r = \infty$ if $n = 0$). We recall Legendre's formula which states that
  \begin{equation}
    \nu_p (n!) = \frac{n - s_p (n)}{p - 1}, \label{eq:factorial:val}
  \end{equation}
  where $s_p (n)$ is the sum of the digits of $n$ in base $p$. It further
  follows from \eqref{eq:bin:coeff} that, if $\nu_p (\lambda) < 0$, then
  \begin{equation}
    \nu_p \left(\binom{\lambda}{n} \right) = n \nu_p (\lambda) - \nu_p (n!) .
    \label{eq:binom:val:neg}
  \end{equation}
  In particular, by combining this with Legendre's formula, we have
  \begin{eqnarray*}
    \nu_p \left(\binom{p^{- r}}{n} \right) & = & - r n - \nu_p (n!) = -
    \left(r + \frac{1}{p - 1} \right) n + \frac{s_p (n)}{p - 1}\\
    & \geq & - \left(r + \frac{1}{p - 1} \right) n \geq - (r + 1)
    n.
  \end{eqnarray*}
  It therefore follows that the coefficients of $(1 + a x)^{1 / p^r}$ are
  $p$-integral if $\nu_p (a) \geq r + 1$. In particular, $(1 + p^{r + 1}
  x)^{1 / p^r} \in \mathbb{Z}_p [[x]]$, as claimed.
\end{proof}

We are now in a convenient position to prove Theorem~\ref{thm:ps:int} which is
restated in expanded form below. We note that the equivalence of
\ref{i:ps:int:Z:pf} and \ref{i:ps:int:pr:pf} follows from the result of
Heninger, Rains and Sloane \cite[Theorem~1]{hrs-integrality} that we
indicate in Example~\ref{eg:nthroot:mod}.

\begin{theorem}
  \label{thm:ps:int:pf}Suppose $f (x) = 1 + a_1 x + a_2 x^2 + \ldots \in
  \mathbb{Z}_p [[x]]$ and $r \in \mathbb{Z}_{> 0}$. Then the following are
  equivalent.
  \begin{enumerate}
    \item \label{i:ps:int:Z:pf}$f (x)^{1 / p^r} \in \mathbb{Z}_p [[x]]$
    
    \item \label{i:ps:int:pr:pf}$f (x) \equiv g (x)^{p^r} \pmod{p^{r + 1}}$ for some $g (x) \in \mathbb{Z}_p [[x]]$
    
    \item \label{i:ps:int:apr:pf}$f (x) \equiv (1 + a_{p^r} x + a_{2 p^r} x^2
    + \ldots)^{p^r} \pmod{p^{r + 1}}$
  \end{enumerate}
\end{theorem}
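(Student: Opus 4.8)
The plan is to establish the three conditions cyclically, in the order \ref{i:ps:int:Z:pf} $\Rightarrow$ \ref{i:ps:int:apr:pf} $\Rightarrow$ \ref{i:ps:int:pr:pf} $\Rightarrow$ \ref{i:ps:int:Z:pf}. The middle implication \ref{i:ps:int:apr:pf} $\Rightarrow$ \ref{i:ps:int:pr:pf} is immediate: one simply takes $g (x) = 1 + a_{p^r} x + a_{2 p^r} x^2 + \ldots$. The two implications carrying content are \ref{i:ps:int:Z:pf} $\Rightarrow$ \ref{i:ps:int:apr:pf} and \ref{i:ps:int:pr:pf} $\Rightarrow$ \ref{i:ps:int:Z:pf}, and both will be powered by Lemma~\ref{lem:f:g:pr}, which converts a congruence modulo $p$ into a congruence of $p^r$th powers modulo $p^{r + 1}$ (applied with $r + 1$ in place of $r$).

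For \ref{i:ps:int:Z:pf} $\Rightarrow$ \ref{i:ps:int:apr:pf}, I would set $g (x) = f (x)^{1 / p^r}$, which by hypothesis lies in $1 + x \mathbb{Z}_p [[x]]$ and satisfies $g (x)^{p^r} = f (x)$ exactly. Iterating Fermat's little theorem \eqref{eq:f:fermat} for power series (as already recorded in the proof of Lemma~\ref{lem:f:g:pr}) yields $f (x) = g (x)^{p^r} \equiv g (x^{p^r}) \pmod{p}$. The right-hand side is supported on exponents divisible by $p^r$, so writing $g (x) = \sum_j g_j x^j$ and comparing coefficients gives $a_{j p^r} \equiv g_j \pmod{p}$ for every $j$. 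Hence the multisection $G (x) := 1 + a_{p^r} x + a_{2 p^r} x^2 + \ldots$ satisfies $G (x) \equiv g (x) \pmod{p}$, and Lemma~\ref{lem:f:g:pr} upgrades this to $G (x)^{p^r} \equiv g (x)^{p^r} = f (x) \pmod{p^{r + 1}}$, which is precisely \ref{i:ps:int:apr:pf}.

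For \ref{i:ps:int:pr:pf} $\Rightarrow$ \ref{i:ps:int:Z:pf}, suppose $f (x) \equiv g (x)^{p^r} \pmod{p^{r + 1}}$ with $g (x) \in \mathbb{Z}_p [[x]]$. First I would normalize $g$ to have constant term $1$: its constant term $c$ satisfies $c^{p^r} \equiv 1 \pmod{p}$, hence $c \equiv 1 \pmod{p}$ by Fermat, so replacing $g (x)$ by $g (x) - (c - 1)$ leaves it unchanged modulo $p$ and, by Lemma~\ref{lem:f:g:pr}, preserves the congruence $g (x)^{p^r} \equiv f (x) \pmod{p^{r + 1}}$. With $g \in 1 + x \mathbb{Z}_p [[x]]$, both $f$ and $g^{p^r}$ have constant term $1$, so $f (x) = g (x)^{p^r} (1 + p^{r + 1} w (x))$ for some $w (x) \in x \mathbb{Z}_p [[x]]$. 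Taking $p^r$th roots via the binomial expansion \eqref{eq:bin} and the power laws then gives
\[
  f (x)^{1 / p^r} = g (x) \sum_{n \ge 0} \binom{p^{- r}}{n} p^{(r + 1) n} w (x)^n .
\]
The valuation estimate $\nu_p \!\left(\binom{p^{- r}}{n} \right) \ge - (r + 1) n$ from the proof of Lemma~\ref{lem:bin} shows that each coefficient $\binom{p^{- r}}{n} p^{(r + 1) n}$ is $p$-integral, whence $f (x)^{1 / p^r} \in \mathbb{Z}_p [[x]]$.

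The step I expect to require the most care is the normalization and factorization in \ref{i:ps:int:pr:pf} $\Rightarrow$ \ref{i:ps:int:Z:pf}. The bound $\nu_p (\binom{p^{- r}}{n} p^{(r + 1) n}) \ge 0$ is tight, and for $p = 2$ these coefficients do not tend to $0$ $p$-adically; consequently, convergence of the displayed series in $\mathbb{Z}_p [[x]]$ genuinely depends on having arranged $w (x) \in x \mathbb{Z}_p [[x]]$ (equivalently, constant term $1$ for $g$), so that $w (x)^n$ contributes only in degrees $\ge n$ and each coefficient of the sum is a finite $p$-integral combination. Once this normalization is in place, the remainder is bookkeeping with the two lemmas already established.
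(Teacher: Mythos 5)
Your proof is correct and runs on the same two engines as the paper's: Lemma~\ref{lem:f:g:pr} (via iterated Fermat) to upgrade congruences modulo $p$ to congruences of $p^r$th powers modulo $p^{r+1}$, and the valuation bound $\nu_p \bigl( \binom{p^{-r}}{n} \bigr) \geq -(r+1)n$ behind Lemma~\ref{lem:bin} to extract the root. The only structural difference is your cyclic arrangement \ref{i:ps:int:Z:pf} $\Rightarrow$ \ref{i:ps:int:apr:pf} $\Rightarrow$ \ref{i:ps:int:pr:pf} $\Rightarrow$ \ref{i:ps:int:Z:pf}: proving \ref{i:ps:int:pr:pf} $\Rightarrow$ \ref{i:ps:int:Z:pf} directly forces your constant-term normalization of $g$ (which you handle correctly), whereas the paper instead proves \ref{i:ps:int:pr:pf} $\Rightarrow$ \ref{i:ps:int:apr:pf} and extracts the root only from \ref{i:ps:int:apr:pf}, where the candidate root $1 + a_{p^r} x + a_{2p^r} x^2 + \ldots$ has constant term $1$ automatically, so no normalization is needed.
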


\begin{proof}
  Note that $f (x)^{1 / p^r} \in \mathbb{Z}_p [[x]]$ if and only if there
  exists $g (x) \in \mathbb{Z}_p [[x]]$ such that $g (x)^{p^r} = f (x)$. As
  such, \ref{i:ps:int:Z:pf} clearly implies \ref{i:ps:int:pr:pf}.
  
  Obviously, \ref{i:ps:int:apr:pf} implies \ref{i:ps:int:pr:pf}. Let us show
  that the two conditions are, in fact, equivalent. To that end, suppose that
  \ref{i:ps:int:pr:pf} holds. Write $g (x) = b_0 + b_1 x + b_2 x^2 + \ldots$
  and observe that, by repeated application of Fermat's little theorem
  \eqref{eq:f:fermat},
  \begin{equation*}
    g (x)^{p^r} \equiv g (x^{p^r}) = b_0 + b_1 x^{p^r} + b_2 x^{2 p^r} +
     \ldots \pmod{p} .
  \end{equation*}
  Thus, it follows from $f (x) \equiv g (x)^{p^r} \pmod{p^{r + 1}}$ that $b_m \equiv a_{m p^r} \pmod{p}$. In
  other words,
  \begin{equation*}
    g (x) \equiv 1 + a_{p^r} x + a_{2 p^r} x^2 + \ldots \pmod{p}
     .
  \end{equation*}
  By Lemma~\ref{lem:f:g:pr}, this congruence is equivalent to
  \begin{equation*}
    g (x)^{p^r} \equiv (1 + a_{p^r} x + a_{2 p^r} x^2 + \ldots)^{p^r} \pmod{p^{r + 1}},
  \end{equation*}
  so that condition \ref{i:ps:int:apr:pf} follows from \ref{i:ps:int:pr:pf}.
  
  Finally, suppose that condition \ref{i:ps:int:apr:pf} holds. We need to show
  that \ref{i:ps:int:Z:pf} holds, that is, $f (x)^{1 / p^r} \in \mathbb{Z}_p
  [[x]]$. Write $g (x) = 1 + a_{p^r} x + a_{2 p^r} x^2 + \ldots$, and let
  $\Delta (x) = f (x) - g (x)^{p^r}$. By \ref{i:ps:int:apr:pf}, we have
  $\Delta (x) \in p^{r + 1} x\mathbb{Z}_p [[x]]$. Observe that it follows from
  Lemma~\ref{lem:bin} that $(1 + p^{r + 1} x)^{1 / p^r} \in \mathbb{Z}_p
  [[x]]$. In particular, $(1 + h (x))^{1 / p^r} \in \mathbb{Z}_p [[x]]$
  whenever $h (x) \in p^{r + 1} x\mathbb{Z}_p [[x]]$. We therefore conclude
  that
  \begin{equation*}
    f (x)^{1 / p^r} = (g (x)^{p^r} + \Delta (x))^{1 / p^r} = g (x) \left(1 +
     \frac{\Delta (x)}{g (x)^{p^r}} \right)^{1 / p^r} \in \mathbb{Z}_p [[x]]
  \end{equation*}
  because $g (x)$, and thus $g (x)^{- p^r}$ as well, are in $1 + x\mathbb{Z}_p
  [[x]]$.
\end{proof}

\section{The Dieudonn\'e--Dwork criterion}\label{sec:dd}

For comparison with Theorem~\ref{thm:ps:int}, we discuss in this section a
well-known general criterion due to Dieudonn\'e and Dwork that is recorded
as Theorem~\ref{thm:dd} below. We include a proof of this criterion since we
have not seen condition \ref{i:dd:c} stated in the literature (such as
\cite{dwork-norm}, \cite{lang-cyclotomic-2}, \cite{koblitz-p} and
\cite{robert-p}). On the other hand, Dwork's proof for the equivalence of
\ref{i:dd:Z} and \ref{i:dd:q} readily extends to show that \ref{i:dd:Z} and
\ref{i:dd:c} are equivalent as well. As mentioned in \cite{koblitz-p} and
\cite{robert-p}, the Dieudonn\'e--Dwork criterion can be interpreted as
saying that a power series $f (x)$ has $p$-integral coefficients if and only
if it ``commutes to within mod $p$'' with the $p$th power map. Both
\ref{i:dd:q} and \ref{i:dd:c} are natural ways to make such a statement
precise. (We note that, given \ref{i:dd:Z}, the conditions \ref{i:dd:q} and
\ref{i:dd:c} are clearly equivalent. However, it is not obvious to us that the
conditions \ref{i:dd:q} and \ref{i:dd:c} should be equivalent without using
\ref{i:dd:Z} to pass from one to the other.)

\begin{theorem}[Dieudonn\'e--Dwork]
  \label{thm:dd}Let $f (x) \in 1 + x\mathbb{Q}_p [[x]]$. Then the following
  are equivalent:
  \begin{enumerate}
    \item \label{i:dd:Z}$f (x) \in \mathbb{Z}_p [[x]]$
    
    \item \label{i:dd:q}$\displaystyle \frac{f (x)^p}{f (x^p)} \equiv 1 \pmod{p}$
    
    \item \label{i:dd:c}$f (x)^p \equiv f (x^p) \pmod{p}$
  \end{enumerate}
\end{theorem}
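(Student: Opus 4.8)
The plan is to prove that \ref{i:dd:Z} implies both \ref{i:dd:q} and \ref{i:dd:c}, and that each of \ref{i:dd:q} and \ref{i:dd:c} implies \ref{i:dd:Z}; since all three are then equivalent to \ref{i:dd:Z}, they are equivalent to one another. The implications out of \ref{i:dd:Z} are immediate: if $f(x) \in \mathbb{Z}_p[[x]]$, then Fermat's little theorem for power series \eqref{eq:f:fermat} gives $f(x)^p \equiv f(x^p) \pmod{p}$, which is \ref{i:dd:c}, and dividing by the unit $f(x^p) \in 1 + x^p \mathbb{Z}_p[[x]]$ (whose inverse again lies in $1 + x\mathbb{Z}_p[[x]]$) yields \ref{i:dd:q}. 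I would deliberately keep the two converse implications \ref{i:dd:q} $\Rightarrow$ \ref{i:dd:Z} and \ref{i:dd:c} $\Rightarrow$ \ref{i:dd:Z} separate, precisely because, as noted before the theorem, there seems to be no way to pass between \ref{i:dd:q} and \ref{i:dd:c} without first knowing integrality: from $f(x)^p = f(x^p)\bigl(1 + p\gamma(x)\bigr)$ with $\gamma \in \mathbb{Z}_p[[x]]$ one only obtains $f(x)^p - f(x^p) = p\,\gamma(x) f(x^p)$, and the right-hand side is visibly in $p\mathbb{Z}_p[[x]]$ only once $f(x^p)$ is known to have $p$-integral coefficients.

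For the converse directions, I would write $f(x) = 1 + \sum_{n \ge 1} c_n x^n$ with $c_n \in \mathbb{Q}_p$ and prove $c_n \in \mathbb{Z}_p$ by strong induction on $n$, the base case being $c_0 = 1$. Fix $n \ge 1$ and assume $c_1, \ldots, c_{n-1} \in \mathbb{Z}_p$. Comparing the coefficient of $x^n$ on both sides of the defining relation, the left-hand side $f(x)^p$ contributes $p c_n + Q_n$, where $Q_n$ collects all multinomial terms in which no factor supplies $c_n$, so that $Q_n$ depends only on $c_1, \ldots, c_{n-1}$. For the right-hand side, both hypotheses have the shape ``$f(x^p)$ plus a multiple of $p$'': under \ref{i:dd:c} it is $f(x^p) + p\,\delta(x)$ with $\delta \in \mathbb{Z}_p[[x]]$, and under \ref{i:dd:q} it is $f(x^p)\bigl(1 + p\gamma(x)\bigr)$ with $\gamma \in x\mathbb{Z}_p[[x]]$. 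In either case the coefficient of $x^n$ on the right equals $c_{n/p}$ (when $p \mid n$, and $0$ otherwise) plus an element of $p\mathbb{Z}_p$, because every remaining term carries an explicit factor of $p$ times coefficients $c_k$ with $k < n$, which are integral by induction. Equating the two sides gives $p c_n + Q_n = c_{n/p} + p\eta$ for some $\eta \in \mathbb{Z}_p$ (reading $c_{n/p}$ as $0$ when $p \nmid n$), hence $p c_n = (c_{n/p} - Q_n) + p\eta$; since $c_{n/p}, Q_n \in \mathbb{Z}_p$ by induction, this already shows $p c_n \in \mathbb{Z}_p$, that is $\nu_p(c_n) \ge -1$, but not yet the desired integrality.

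The hard part, and the step I expect to be the main obstacle, is to upgrade $\nu_p(c_n) \ge -1$ to $c_n \in \mathbb{Z}_p$ by establishing the congruence $c_{n/p} \equiv Q_n \pmod{p}$ (again with $c_{n/p} = 0$ when $p \nmid n$). The key device is the truncation $\tilde f(x) = 1 + c_1 x + \cdots + c_{n-1} x^{n-1}$, which lies in $\mathbb{Z}_p[[x]]$ by the inductive hypothesis. Since $\tilde f$ has no term of degree $\ge n$, the coefficient of $x^n$ in $\tilde f(x)^p$ receives no contribution from $c_n$ and consists of exactly the multinomial terms counted by $Q_n$; that is, $Q_n = [x^n]\tilde f(x)^p$. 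Now Fermat's little theorem \eqref{eq:f:fermat} applies to $\tilde f \in \mathbb{Z}_p[[x]]$, giving $\tilde f(x)^p \equiv \tilde f(x^p) \pmod{p}$, whence $Q_n = [x^n]\tilde f(x)^p \equiv [x^n]\tilde f(x^p) \pmod{p}$. Finally $[x^n]\tilde f(x^p)$ is $c_{n/p}$ when $p \mid n$ (note $n/p \le n-1$) and $0$ otherwise, which is exactly the needed congruence, so $c_{n/p} - Q_n \in p\mathbb{Z}_p$. Combined with $p c_n = (c_{n/p} - Q_n) + p\eta$ from the previous step, this forces $p c_n \in p\mathbb{Z}_p$, i.e.\ $c_n \in \mathbb{Z}_p$, completing the induction and thereby establishing \ref{i:dd:q} $\Rightarrow$ \ref{i:dd:Z} and \ref{i:dd:c} $\Rightarrow$ \ref{i:dd:Z} at once.
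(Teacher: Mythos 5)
Your proposal is correct and follows essentially the same route as the paper: show \ref{i:dd:Z} implies \ref{i:dd:q} and \ref{i:dd:c} via Fermat's little theorem \eqref{eq:f:fermat}, then prove both converses simultaneously by strong induction on the coefficients, isolating $p c_n$ in $[x^n]f(x)^p$ and applying \eqref{eq:f:fermat} to the integral truncation $1 + c_1 x + \cdots + c_{n-1}x^{n-1}$ to identify the remaining contribution with $c_{n/p}$ modulo $p$. The paper packages the two hypotheses as $f(x)^p = f(x^p) + p\varphi(x)g(x)$ with $\varphi = f(x^p)$ or $\varphi = 1$, which is exactly your case distinction in different notation.
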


\begin{proof}
  We follow the proof in \cite[Chapter~7.2.3]{robert-p}. First, we show that
  \ref{i:dd:Z} implies both \ref{i:dd:q} and \ref{i:dd:c}. Suppose that $f (x)
  \in \mathbb{Z}_p [[x]]$. Then, by Fermat's little theorem
  \eqref{eq:f:fermat}, we have $f (x)^p \equiv f (x^p) \pmod{p}$ which is equivalent to $f (x)^p / f (x^p) \equiv 1$ since $f
  (x^p) \in 1 +\mathbb{Z}_p [[x]]$ is invertible in $\mathbb{Z}_p [[x]]$.
  
  On the other hand, suppose that either \ref{i:dd:q} or \ref{i:dd:c} holds,
  and write $f (x) = 1 + a_1 x + a_2 x^2 + \ldots \in \mathbb{Q}_p [[x]]$. In
  that case, there exists $g (x) = b_1 x + b_2 x^2 + \ldots \in \mathbb{Z}_p
  [[x]]$ such that
  \begin{equation}
    f (x)^p = f (x^p) + p \varphi (x) g (x), \label{eq:dd:fppf}
  \end{equation}
  where $\varphi (x) = f (x^p)$ in the case of \ref{i:dd:q} and $\varphi (x) =
  1$ in the case of \ref{i:dd:c}. It follows that $a_1 = b_1 \in
  \mathbb{Z}_p$. Suppose that $a_j \in \mathbb{Z}_p$ for all $j < n$. We claim
  that $a_n \in \mathbb{Z}_p$ so that \ref{i:dd:Z} follows by induction. To
  show this claim, we compare the coefficient of $x^n$ on both sides of
  \eqref{eq:dd:fppf}. For the left-hand side, we find
  \begin{eqnarray}
    {}[x^n] f (x)^p & = & [x^n]  (1 + a_1 x + \ldots + a_n x^n)^p \nonumber\\
    & = & p a_n + [x^n]  (1 + a_1 x + \ldots + a_{n - 1} x^{n - 1})^p
    \nonumber\\
    & \equiv & p a_n + [x^n]  (1 + a_1 x^p + \ldots + a_{n - 1} x^{(n - 1)
    p}) \pmod{p} \nonumber\\
    & = & p a_n + a_{n / p} .  \label{eq:dd:lhs}
  \end{eqnarray}
  In the final step, we use the convention that $a_{n / p} = 0$ if $p$ does
  not divide $n$. Note that we were able to use Fermat's little theorem
  \eqref{eq:f:fermat} to reduce $(1 + a_1 x + \ldots + a_{n - 1} x^{n - 1})^p$
  since, by the induction hypothesis, all coefficients are in $\mathbb{Z}_p$.
  On the other hand, for the right-hand side of \eqref{eq:dd:fppf}, the only
  coefficients of $f (x^p)$ and $\varphi (x)$ contributing to the coefficient
  of $x^n$ are in $\mathbb{Z}_p$. Further reducing modulo $p$, only $f (x^p)$
  contributes, whose coefficient of $x^n$ is $a_{n / p}$. Upon comparison with
  \eqref{eq:dd:lhs}, we conclude that $p a_n \in p\mathbb{Z}_p$ or,
  equivalently, $a_n \in \mathbb{Z}_p$, as claimed.
\end{proof}

Theorem~\ref{thm:ps:int} was proved by Dwork \cite{dwork-norm} who credits
Dieudonn\'e \cite{dieudonne-exp} for proving the following additive
version (both consider the special case where $f (x)$ has the form $\sum_{n
\geq 0} a_n x^{p^n}$). Since both results are often referred to as
Dwork's lemma in the literature, we find it fitting to include this additive
version here, highlighting that it is consequence of Theorem~\ref{thm:dd}.

\begin{corollary}[Dieudonn\'e--Dwork, additive version]
  \label{cor:dd:exp}Let $f (x) \in x\mathbb{Q}_p [[x]]$. Then $\exp (f (x))
  \in \mathbb{Z}_p [[x]]$ if and only if $f (x^p) - p f (x) \in p\mathbb{Z}_p
  [[x]]$.
\end{corollary}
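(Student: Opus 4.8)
The plan is to apply the Dieudonn\'e--Dwork criterion (Theorem~\ref{thm:dd}) to the power series $F (x) = \exp (f (x))$. Since $f (x) \in x\mathbb{Q}_p [[x]]$ has zero constant term, $F (x) = \exp (f (x))$ is a well-defined element of $1 + x\mathbb{Q}_p [[x]]$, so Theorem~\ref{thm:dd} applies and tells us that $\exp (f (x)) \in \mathbb{Z}_p [[x]]$ if and only if $F (x)^p / F (x^p) \equiv 1 \pmod{p}$.

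The next step is to simplify this quotient using the power laws for the exponential recalled in Section~\ref{sec:review}. Since $F (x)^p = \exp (f (x))^p = \exp (p f (x))$ and $F (x^p) = \exp (f (x^p))$, the group law for $\exp$ gives
\[
  \frac{F (x)^p}{F (x^p)} = \exp \bigl( p f (x) - f (x^p) \bigr) .
\]
Writing $h (x) = p f (x) - f (x^p) \in x\mathbb{Q}_p [[x]]$, the criterion thus becomes the condition $\exp (h (x)) \equiv 1 \pmod{p}$. As $p\mathbb{Z}_p [[x]]$ is closed under negation, it then remains only to show that this is equivalent to $h (x) \in p\mathbb{Z}_p [[x]]$, which is exactly the asserted condition $f (x^p) - p f (x) \in p\mathbb{Z}_p [[x]]$.

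The crux of the argument, and the step I expect to require the most care, is the resulting claim: for $h (x) \in x\mathbb{Q}_p [[x]]$, one has $\exp (h (x)) \equiv 1 \pmod{p}$ if and only if $h (x) \in p\mathbb{Z}_p [[x]]$. For the converse direction, assuming $h (x) \in p\mathbb{Z}_p [[x]]$, I would expand $\exp (h) - 1 = \sum_{n \geq 1} h^n / n!$ and estimate each term: its coefficients have valuation at least $n - \nu_p (n!) \geq 1$, since Legendre's formula~\eqref{eq:factorial:val} gives $\nu_p (n!) = (n - s_p (n)) / (p - 1) \leq n - 1$. For the forward direction, I would use the inverse relationship $h = \log (\exp (h))$ (valid because $h$ has zero constant term) to write $h = \log (1 + u)$ with $u = \exp (h) - 1 \in p\mathbb{Z}_p [[x]]$, and expand $\log (1 + u) = \sum_{n \geq 1} (-1)^{n - 1} u^n / n$; here the $n$th term has coefficients of valuation at least $n - \nu_p (n) \geq 1$, as $\nu_p (n) \leq n - 1$ for all $n \geq 1$. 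Both estimates yield membership in $p\mathbb{Z}_p [[x]]$, proving the claim and with it the corollary.

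The only genuinely delicate point is that $\exp$ and $\log$ a priori introduce the denominators $n!$ and $n$, so one must verify that they nevertheless preserve $p\mathbb{Z}_p [[x]]$; this is precisely what the two valuation bounds $\nu_p (n!) \leq n - 1$ and $\nu_p (n) \leq n - 1$ guarantee, and it is here that the strengthened hypothesis $h \in p\mathbb{Z}_p [[x]]$, rather than merely $h \in \mathbb{Z}_p [[x]]$, is essential.
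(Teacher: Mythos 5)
Your proof is correct, and its first half---applying Theorem~\ref{thm:dd} to $F(x)=\exp(f(x))$ and using the group law of $\exp$ to rewrite $F(x)^p/F(x^p)$ as $\exp\bigl(pf(x)-f(x^p)\bigr)$---is exactly the paper's own reduction (up to an immaterial sign, which you correctly dispose of by noting closure under negation). Where you genuinely diverge is in the proof of the key equivalence that, for $h\in x\mathbb{Q}_p[[x]]$, one has $\exp(h(x))\equiv 1\pmod{p}$ if and only if $h(x)\in p\mathbb{Z}_p[[x]]$; this is the paper's \eqref{eq:dd:exp:pZ}. For the direction assuming $h\in p\mathbb{Z}_p[[x]]$, the paper simply cites the fact $\exp(px)\in 1+px\mathbb{Z}_p[[x]]$ from \cite{robert-p}, which your Legendre-formula estimate $\nu_p(n!)\le n-1$ reproves from scratch. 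For the converse, the paper avoids $\log$ altogether: it picks a minimal $N$ with $a_N\notin p\mathbb{Z}_p$, factors $\exp\bigl(\sum_{n\ge 1}a_nx^n\bigr)$ as $\exp\bigl(\sum_{n<N}a_nx^n\bigr)\cdot\exp\bigl(\sum_{n\ge N}a_nx^n\bigr)$, applies the already-established direction to the first factor and its inverse, and reads off the coefficient of $x^N$ in the second factor to reach a contradiction. You instead invert formally, writing $h=\log(1+u)$ with $u=\exp(h)-1\in p\mathbb{Z}_p[[x]]$, and control the denominators via $\nu_p(n)\le n-1$. Both arguments are sound. Yours is more self-contained and treats the two directions symmetrically with explicit, elementary valuation bounds; the paper's is shorter given the cited fact about $\exp(px)$, and its minimal-counterexample argument sidesteps any need to estimate the coefficients of the formal logarithm by exploiting only the multiplicativity of $\exp$.
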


\begin{proof}
  It follows from Theorem~\ref{thm:dd} that $\exp (f (x)) \in \mathbb{Z}_p
  [[x]]$ if and only if
  \begin{equation*}
    \exp (f (x^p) - p f (x)) \equiv 1 \pmod{p\mathbb{Z}_p [[x]]}
     .
  \end{equation*}
  The claim therefore follows from
  \begin{equation}
    \exp \left(\sum_{n \geq 1} a_n x^n \right) \in p\mathbb{Z}_p [[x]]
    \quad \Longleftrightarrow \quad \sum_{n \geq 1} a_n x^n \in
    p\mathbb{Z}_p [[x]] . \label{eq:dd:exp:pZ}
  \end{equation}
  The ``$\Longleftarrow$'' part of this equivalence is a consequence of the
  fact that $\exp (p x) \in 1 + p x\mathbb{Z}_p [[x]]$ (see, for instance,
  \cite[Chapter~7.2.3]{robert-p}). On the other hand, suppose that the
  left-hand side of \eqref{eq:dd:exp:pZ} holds but that the right-hand side
  does not. In that case, there exists $N \geq 1$ such that $a_N \not\in
  p\mathbb{Z}_p$. Suppose that $N$ is chosen as small as possible. Since $\exp
  \left(\sum_{n < N} a_n x^n \right)$ as well as its inverse are in $1 + p
  x\mathbb{Z}_p [[x]]$, we conclude that $\exp \left(\sum_{n \geq N} a_n
  x^n \right) \in p\mathbb{Z}_p [[x]]$. The coefficient of $x^N$ in that
  series is $a_N$ so that, in particular, $a_N \in p\mathbb{Z}_p$. This,
  however, is a contradiction.
\end{proof}

There are various directions in which Theorem~\ref{thm:dd} and
Corollary~\ref{cor:dd:exp} can be extended, including to several variables and
extensions of $\mathbb{Q}_p$. We refer the interested reader to
\cite{robert-p} for more information. For recent work on truncated versions
of this integrality criterion, we refer to \cite{km-dwork}. Here, for
comparison with Theorem~\ref{thm:ps:int}, we offer the following slight
extension of the Dieudonn\'e--Dwork criterion as stated in
Theorem~\ref{thm:dd} (which is the case $r = 0$ of the following).

\begin{corollary}
  \label{cor:ddx}Let $f (x) \in 1 + x\mathbb{Q}_p [[x]]$ and $r \in
  \mathbb{Z}_{\geq 0}$. Then the following are equivalent:
  \begin{enumerate}
    \item \label{i:ddx:Z}$f (x)^{1 / p^r} \in \mathbb{Z}_p [[x]]$
    
    \item \label{i:ddx:q}$\displaystyle \frac{f (x)^p}{f (x^p)} \equiv 1 \pmod{p^{r + 1}}$
    
    \item \label{i:ddx:c}$f (x)^p \equiv f (x^p) \pmod{p^{r
    + 1}}$
  \end{enumerate}
\end{corollary}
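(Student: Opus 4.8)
The plan is to reduce all three equivalences to the additive Dieudonn\'e--Dwork criterion (Corollary~\ref{cor:dd:exp}) by passing to the logarithmic scale, where extracting a $p^r$th root becomes division by $p^r$. First I would record the identity $f(x)^{1/p^r} = \exp\!\big(p^{-r}\log f(x)\big)$, valid for any $f(x) \in 1 + x\mathbb{Q}_p[[x]]$ because $\log f \in x\mathbb{Q}_p[[x]]$ and the power laws reviewed in Section~\ref{sec:review} hold. Applying Corollary~\ref{cor:dd:exp} to the series $F(x) = p^{-r}\log f(x) \in x\mathbb{Q}_p[[x]]$, and using that the substitution $x \mapsto x^p$ commutes with $\log$ (both are given by universal power series in $f-1$), condition~\ref{i:ddx:Z} is equivalent to
\[
  p^{-r}\big(\log f(x^p) - p\log f(x)\big) \in p\mathbb{Z}_p[[x]], \qquad\text{i.e.}\qquad \log\!\left(\frac{f(x)^p}{f(x^p)}\right) \in p^{r+1}\mathbb{Z}_p[[x]].
\]

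The heart of the matter is then a transfer lemma: for any $w(x) \in 1 + x\mathbb{Q}_p[[x]]$ and any $s \geq 1$, one has $w \equiv 1 \pmod{p^s}$ if and only if $\log w \in p^s\mathbb{Z}_p[[x]]$. Applying this with $w = f(x)^p/f(x^p)$ and $s = r+1$ converts the displayed condition into $f(x)^p/f(x^p) \equiv 1 \pmod{p^{r+1}}$, which is exactly \ref{i:ddx:q}; note that this establishes the equivalence of \ref{i:ddx:Z} and \ref{i:ddx:q} directly, with no integrality hypothesis on $f$. I expect this transfer lemma to be the main obstacle, though it should be surmountable by the same term-by-term valuation estimates used in the proof of \eqref{eq:dd:exp:pZ}, which it generalizes. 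Concretely, writing $w = 1 + u$ with $u \in p^s x\mathbb{Z}_p[[x]]$ gives $\nu_p(u^n/n) \geq ns - \nu_p(n) \geq s$ for all $n \geq 1$, and writing $v = \log w \in p^s x\mathbb{Z}_p[[x]]$ gives, via Legendre's formula \eqref{eq:factorial:val}, $\nu_p(v^n/n!) \geq ns - (n - s_p(n))/(p-1) \geq s$; the delicate point is that both estimates remain valid for every prime $p$ and every $s \geq 1$.

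To bring in \ref{i:ddx:c}, I would first observe that each of the three conditions forces $f(x) \in \mathbb{Z}_p[[x]]$: for \ref{i:ddx:Z} this is immediate from $f = (f^{1/p^r})^{p^r}$, while \ref{i:ddx:q} and \ref{i:ddx:c} both imply the corresponding congruence modulo $p$, whence $f \in \mathbb{Z}_p[[x]]$ by the case $r = 0$ already recorded as Theorem~\ref{thm:dd}. Once $f \in 1 + x\mathbb{Z}_p[[x]]$, the series $f(x^p) \in 1 + x\mathbb{Z}_p[[x]]$ is a unit of $\mathbb{Z}_p[[x]]$, so multiplying the congruence in \ref{i:ddx:q} by $f(x^p)$ shows at once that \ref{i:ddx:q} and \ref{i:ddx:c} are equivalent, closing the cycle.

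Finally, it is worth noting how this corollary interfaces with Theorem~\ref{thm:ps:int:pf}: assuming $f \in 1 + x\mathbb{Z}_p[[x]]$, condition~\ref{i:ddx:Z} is equivalent, by that theorem, to $f(x) \equiv h(x)^{p^r} \pmod{p^{r+1}}$ with $h(x) = 1 + a_{p^r}x + a_{2p^r}x^2 + \cdots$. From this congruence one derives \ref{i:ddx:c} cleanly by the chain $f^p \equiv h^{p^{r+1}} \pmod{p^{r+1}}$ (expanding $(h^{p^r} + p^{r+1}u)^p$), then $h^{p^{r+1}} \equiv h(x^p)^{p^r} \pmod{p^{r+1}}$ (Fermat's little theorem \eqref{eq:f:fermat} together with Lemma~\ref{lem:f:g:pr}), and finally $h(x^p)^{p^r} \equiv f(x^p) \pmod{p^{r+1}}$. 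The reverse implication, recovering the $p^r$th-power structure of $f$ from \ref{i:ddx:c}, is precisely the step that is awkward along this route, and it is exactly this step that the logarithmic approach above circumvents by handling both directions symmetrically.
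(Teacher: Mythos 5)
Your proposal is correct, but it takes a genuinely different route from the paper's proof. The paper first uses Theorem~\ref{thm:dd} to observe that each of the three conditions forces $f(x) \in \mathbb{Z}_p[[x]]$, deduces \ref{i:ddx:q}$\Leftrightarrow$\ref{i:ddx:c} from the invertibility of $f(x^p)$ exactly as you do, and then settles \ref{i:ddx:Z} multiplicatively: by Lemma~\ref{lem:f:g:pr}, condition \ref{i:ddx:q} is equivalent to $\left(f(x)^p/f(x^p)\right)^{1/p^r} \equiv 1 \pmod{p}$, which by Theorem~\ref{thm:dd} applied to $f(x)^{1/p^r}$ in place of $f(x)$ is precisely \ref{i:ddx:Z}. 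You instead work on the logarithmic side, writing $f^{1/p^r} = \exp\left(p^{-r}\log f\right)$, invoking the additive criterion of Corollary~\ref{cor:dd:exp}, and converting back with your transfer lemma $w \equiv 1 \pmod{p^s} \Leftrightarrow \log w \in p^s\mathbb{Z}_p[[x]]$. That lemma is the only new ingredient, and your valuation estimates do hold for every prime, including $p=2$: both bounds reduce to $\nu_p(n) \leq n-1$ and $\nu_p(n!) = (n-s_p(n))/(p-1) \leq n-1$, so each term of $\log(1+u)$ and of $\exp(v)-1$ has valuation at least $s$; note that your lemma is exactly \eqref{eq:dd:exp:pZ} generalized from $s=1$ to arbitrary $s$, proved by the same kind of estimates the paper uses there. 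As for what each approach buys: the paper's argument is shorter because Lemma~\ref{lem:f:g:pr} is already in hand and everything stays multiplicative, but it needs the preliminary reduction to $p$-integral $f$ even for the equivalence \ref{i:ddx:Z}$\Leftrightarrow$\ref{i:ddx:q}; your argument costs one extra lemma but establishes \ref{i:ddx:Z}$\Leftrightarrow$\ref{i:ddx:q} directly for all $f \in 1 + x\mathbb{Q}_p[[x]]$ and makes the modulus $p^{r+1}$ conceptually transparent, since dividing $\log f$ by $p^r$ shifts the additive criterion from mod $p$ to mod $p^{r+1}$. Your closing remark connecting the corollary to Theorem~\ref{thm:ps:int:pf} is accurate but not needed for the proof.
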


\begin{proof}
  It follows from Theorem~\ref{thm:dd} that conditions \ref{i:ddx:q} and
  \ref{i:ddx:c} imply $f (x) \in \mathbb{Z}_p [[x]]$. Since condition
  \ref{i:ddx:Z} clearly implies $f (x) \in \mathbb{Z}_p [[x]]$ as well, we may
  assume throughout that $f (x) \in 1 + x\mathbb{Z}_p [[x]]$. This assumption,
  in particular, implies that $1 / f (x) \in \mathbb{Z}_p [[x]]$ so that it
  becomes clear that conditions \ref{i:ddx:q} and \ref{i:ddx:c} are
  equivalent.
  
  Recall from Lemma~\ref{lem:f:g:pr} that, for $p$-integral power series, $f
  (x)^{p^r} \equiv g (x)^{p^r} \pmod{p^{r + 1}}$ is
  equivalent to $f (x) \equiv g (x) \pmod{p}$.
  Consequently, condition \ref{i:ddx:q} is equivalent to
  \begin{equation*}
    \left(\frac{f (x)^p}{f (x^p)} \right)^{1 / p^r} \equiv 1 \pmod{p} .
  \end{equation*}
  By the Dieudonn\'e--Dwork criterion as in Theorem~\ref{thm:dd}, applied
  with $f (x)^{1 / p^r}$ in place of $f (x)$, it follows that this latter
  congruence is equivalent to $f (x)^{1 / p^r} \in \mathbb{Z}_p [[x]]$, which
  is condition \ref{i:ddx:Z}.
\end{proof}

In \cite{robert-p}, the Dieudonn\'e--Dwork criterion is described by
stating that ``the extent to which the operations
\begin{itemize}
  \item first raising $x$ to the power $p$ and then applying $f$,
  
  \item first computing $f (x)$ and then raising to the $p$th power
\end{itemize}
lead to similar results, is a measure of the integrality of the coefficients
of $f (x)$.'' Corollary~\ref{cor:ddx} can be interpreted as a quantifiable
version of this statement. Corollary~\ref{cor:ddx} also provides a second
characterization of the $p$-integrality of power series $f (x)^{1 / p^r}$
which differs from the characterization we offer in Theorem~\ref{thm:ps:int}.
To appreciate this difference, we conclude with the following example.

\begin{example}
  As observed in Example~\ref{eg:pr1}, Theorem~\ref{thm:ps:int} immediately
  implies that, if $a_1, a_2, \ldots, a_d \in \mathbb{Z}_p$ with $d < p^r$,
  then $(1 + a_1 x + a_2 x^2 + \ldots + a_d x^d)^{1 / p^r} \in \mathbb{Z}_p
  [[x]]$ if and only if $a_1, a_2, \ldots, a_d \in p^{r + 1} \mathbb{Z}_p$. On
  the other hand, applying Corollary~\ref{cor:ddx} to this case, we find that
  $(1 + a_1 x + a_2 x^2 + \ldots + a_d x^d)^{1 / p^r} \in \mathbb{Z}_p [[x]]$
  if and only if
  \begin{equation*}
    (1 + a_1 x + a_2 x^2 + \ldots + a_d x^d)^p \equiv 1 + a_1 x^p + a_2 x^{2
     p} + \ldots + a_d x^{p d} \pmod{p^{r + 1}} .
  \end{equation*}
  It then requires additional thought to conclude that this congruence is
  equivalent to $a_1, a_2, \ldots, a_d \in p^{r + 1} \mathbb{Z}_p$.
\end{example}

\subsection*{Acknowledgements}

We thank Tewodros Amdeberhan and Christoph Koutschan for valuable comments on
earlier drafts of this paper. We are also grateful to the referee for helpful
comments that made the paper more focused, as well as for suggesting the
present proof of Corollary~\ref{cor:ddx} that avoids use of
Theorem~\ref{thm:ps:int}.

The second author gratefully acknowledges support through a Collaboration
Grant (\#514645) awarded by the Simons Foundation.

\end{document}